\documentclass[11pt, a4paper]{amsart}
\usepackage[utf8]{inputenc}
\usepackage[T1]{fontenc}
\usepackage[margin=0.9in]{geometry}

\usepackage{comment}
\usepackage{lmodern}
\usepackage{upgreek}
\usepackage{color}
\usepackage{amsmath}
\usepackage{amssymb}
\usepackage{amsthm}
\usepackage{extarrows}

\usepackage{pdfpages}

\usepackage{fancyhdr}

\usepackage{setspace}

\newcommand{\m}{\mathfrak m}

\newcommand{\depth}{\operatorname{depth}}

\usepackage{hyperref}
\hypersetup{
	colorlinks=true,
	linkcolor=blue,
	urlcolor=blue,
	citecolor=blue,
	pdftitle={On an Instance of   the Small Cohen-Macaulay Conjecture II},
	pdfauthor={Likun Xie}
}

\numberwithin{equation}{section}
\newtheorem{theorem}{Theorem}[section]
\newtheorem{lemma}[theorem]{Lemma}
\newtheorem{proposition}[theorem]{Proposition}
\newtheorem{corollary}[theorem]{Corollary}

\theoremstyle{remark}
\newtheorem{remark}[theorem]{Remark}

\newtheorem{example}{Example}[section]

\theoremstyle{definition}

\newtheorem*{prob*}{Problem}

	\title[On an Instance of   the Small Cohen-Macaulay Conjecture II]
{On an Instance of   the Small Cohen-Macaulay Conjecture II}
\author{Likun Xie}
 
\address{Max-Planck-Institut für Mathematik
	Vivatsgasse 7, 53111, Bonn, Germany} 
\email{xie@mpim-bonn.mpg.de}
\subjclass[2020]{Primary 13C14, 13D45 }
\keywords{maximal Cohen--Macaulay modules, local cohomology}

\begin{document}
	
\begin{abstract}
We show that any $d$-dimensional local ring $A$ with a dualizing
complex, $\depth A=d-1$,  and cyclic deficiency module
\(K^{d-1}(A)\) admits a maximal Cohen--Macaulay module. It is constructed as the unique nonzero cohomology module of the cone of the derived morphism induced by a
	surjection \(A\to K^{d-1}(A)\).  When \(A\) is quasi-Gorenstein, this
	module is identified with the first syzygy of the canonical module
	\(\omega_{A/xA}\), for any
	\(x\in\operatorname{ann}_A K^{d-1}(A)\) that is regular on \(A\).
	This recovers a theorem of Tavanfar and Shimomoto in the
	3-dimensional quasi-Gorenstein case with \(K^2(A)\cong k\).
	We also give examples of section rings satisfying the hypotheses of
	our theorem.
\end{abstract}

	\maketitle	
\section{Introduction}	

The existence of finitely generated maximal Cohen--Macaulay modules over local
rings is a fundamental problem in commutative algebra. While such modules arise naturally over Cohen--Macaulay rings, their existence over non-Cohen--Macaulay rings is far less understood, and relatively few general constructions are available. A result of Tavanfar and Shimomoto
\cite[Thm.~3.2]{tavanfar_Shimomoto} provides one such class essentially in dimension
3. Their result reduces to the following
statement: if \((A,\m)\) is a 3-dimensional quasi-Gorenstein local ring with
\(\depth A=2\) and \(H^2_{\mathfrak m}(A)\cong k\), then  for some
nonzerodivisor \(x\in \m \)  the first syzygy
\(\Omega^1\omega_{A/xA}\) of the canonical module of \(A/xA\) is maximal
Cohen--Macaulay. In our previous work \cite{xie}, we gave a simpler proof of this result.
The proof showed that the hypothesis \(H^2_{\mathfrak m}(A)\cong k\)
enters through the structure of the deficiency module \(K^2(A)\), and
naturally led to the question of what conditions on \(K^{d-1}(A)\)
suffice. We show here that cyclicity is sufficient, in arbitrary dimension
and without assuming that \(A\) is quasi-Gorenstein.

The main result of this paper makes this observation precise.
\begin{theorem}\label{main}
	Let $(A,\mathfrak m)$ be a local ring of dimension $d$ admitting a
	dualizing complex, or equivalently, a homomorphic image of a Gorenstein
	ring. Suppose that
	$
	\depth A=d-1
	$
	and that $K^{d-1}(A)$ is cyclic. Then $A$ admits a  maximal Cohen--Macaulay module.
\end{theorem}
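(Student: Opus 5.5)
The plan is to build the module as the single nonzero cohomology of the cone of a derived-category morphism, and then to read off maximal Cohen--Macaulayness by local duality. Let $D$ be the normalized dualizing complex of $A$, so that $H^{-i}(D)\cong K^i(A)$. We write $\mathbb D(-)=\mathrm{RHom}_A(-,D)$, and local duality identifies $H^i_{\mathfrak m}(X)$ with the Matlis dual of $H^{-i}(\mathbb D(X))$ for $X$ bounded with finitely generated cohomology. Since $\depth A=d-1$ and $\dim A=d$, the complex $D$ has exactly two nonzero cohomologies: $H^{-d}(D)=\omega_A$ and $H^{-(d-1)}(D)=K:=K^{d-1}(A)$.

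\textbf{Step 1 (the morphism).} Fix a generator $z$ of the cyclic module $K$, giving a surjection $\pi\colon A\to K$ with kernel $I=\operatorname{ann}_A z$. Because $H^{-(d-1)}(D)$ is the lowest-but-one and $D$ has no cohomology above $-(d-1)$, we have $\operatorname{Hom}_{\mathbf D(A)}(A,D[-(d-1)])=H^{-(d-1)}(D)=K$. So $z$ gives a morphism $h\colon A\to D[-(d-1)]$, and by construction $H^0(h)=\pi$.

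\textbf{Step 2 (cohomology of the cone).} Set $C=\operatorname{cone}(h)$. The long exact sequence reads
\[
0\to \omega_A\to H^{-1}(C)\to A\xrightarrow{\ \pi\ } K\to H^0(C)\to 0,
\]
and all other terms vanish. Since $\pi$ is surjective, $H^0(C)=0$, so $C\simeq M[1]$ for the finitely generated module $M:=H^{-1}(C)$. This module sits in an extension $0\to\omega_A\to M\to I\to 0$; in particular $M\neq 0$.

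\textbf{Step 3 (the dual side).} Apply $\mathbb D$. Using $\mathbb D(A)=D$ and $\mathbb D(D)=A$, the dual $\mathbb D(h)\colon A[d-1]\to D$ is, up to shift, the cone/fiber of the same kind of map, and $\mathbb D(C)$ is (up to a shift by one) $\operatorname{cone}(\mathbb D(h))$. The key point is that $\mathbb D(h)$ corresponds under the identification $\operatorname{Hom}(A[d-1],D)=H^{-(d-1)}(D)=K$ to the same element $z$. Therefore $H^{-(d-1)}(\mathbb D(h))$ is again the surjection $\pi$. The long exact sequence for $\operatorname{cone}(\mathbb D(h))$ then kills degree $-(d-1)$ and leaves a single cohomology in degree $-d$, namely an extension $N$ of $I$ by $\omega_A$. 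Keeping track of shifts, this gives $\mathbb D(M)\simeq N[d]$.

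\textbf{Step 4 (conclusion).} By local duality, $H^i_{\mathfrak m}(M)^\vee\cong H^{-i}(\mathbb D(M))=0$ for $i\neq d$. Hence $M$ is a nonzero finitely generated module with $\depth M=d=\dim A$, that is, a maximal Cohen--Macaulay module.

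\textbf{Main obstacle.} I expect the delicate point to be Step 3. One must verify carefully that dualizing $h$ really yields the morphism classified by the same generator $z$, i.e.\ that the biduality $\mathbb D\mathbb D\simeq\mathrm{id}$ is compatible with the identification $\operatorname{Hom}(A,D[-(d-1)])\cong H^{-(d-1)}(D)$. It must also be checked that the resulting map on $H^{-(d-1)}$ is surjective rather than merely nonzero. I would verify this by computing $\mathbb D(h)$ as $\mathrm{RHom}(h,D)$ evaluated on $1\in A$, which reduces it to the evaluation map $D\otimes \mathrm{RHom}(D,D)\to D$. The shift bookkeeping ($C=M[1]$ versus $\mathbb D(C)=\mathbb D(M)[-1]$) is routine but must be done consistently.
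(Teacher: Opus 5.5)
Your proposal is correct and follows essentially the same route as the paper. You take the cone of the morphism $A\to D[-(d-1)]$ classified by a generator of $K^{d-1}(A)$, identify the dual of that cone (up to shift) with the cone of the transposed morphism, which is again surjective on $H^0$, and conclude by local duality. For the step you flag as delicate, the paper avoids the evaluation-map computation: sending $h$ to $\mathbb{D}(h)$ (composed with biduality and shifted back) is an $A$-linear bijection of $\operatorname{Hom}_{D(A)}(A,D[-(d-1)])\cong K^{d-1}(A)$, hence an automorphism of this cyclic module, so it carries a generator to a generator. Your direct computation would give $\pm z$, depending on sign conventions for shifts, which suffices equally well.
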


The construction  is naturally formulated in the derived category. Let $D_A$ be a dualizing complex normalized so that $H^{-i}(D_A)=K^i(A)$, and set $X=D_A[d-1]$. Then $H^{-1}(X)=\omega_A$ and $H^0(X)=K^{d-1}(A)$. A generator of the cyclic module $K^{d-1}(A)$ determines a morphism $f\colon A\to X$ in $D(A)$, and we consider the module
$
M:=H^{-1}\bigl(\operatorname{Cone}(f)\bigr).
$ 
The surjectivity of the induced map $A\to K^{d-1}(A)$ implies that the cone has cohomology concentrated in degree \(-1\). Using duality, we show that the dual of this cone is again, up to shift, the cone associated to another generator of \(K^{d-1}(A)\), and hence also has cohomology concentrated in
degree \(-1\).  Local duality then implies   \(M\) is maximal Cohen--Macaulay.

Although the proof can be presented succinctly in the derived setting, we
also give a concrete description of the resulting module \(M\). In the setup of Theorem \ref{main}, suppose that
\(x\in \operatorname{ann}_A K^{d-1}(A)\) is a nonzerodivisor on \(A\).
We show in Lemma \ref{cone_identification} that \(M\) is the kernel of a natural map
\[
\Phi\colon \omega_A\oplus A \longrightarrow \omega_{A/xA}.
\]
Moreover, when \(A\) is quasi-Gorenstein, namely when
\(\omega_A\cong A\), we show that \(M\) is precisely the first syzygy
\(\Omega^1\omega_{A/xA}\) of the canonical module of \(A/xA\), see Proposition \ref{cor_syzygy}. Thus our
construction recovers the $3$-dimensional construction of Tavanfar and
Shimomoto and extends it to arbitrary dimension under the cyclicity
hypothesis on \(K^{d-1}(A)\). Since \(\omega_{A/xA}\) is a Cohen--Macaulay module of dimension \(d-1\),
this fits naturally into a more general question about when higher syzygies
of lower-dimensional Cohen--Macaulay modules are maximal Cohen--Macaulay. In Section~\ref{higher_syzygy}, we briefly discuss a condition for the
\(r\)-th syzygy of a \(d-r\)-dimensional Cohen--Macaulay module to be
maximal Cohen--Macaulay. This condition simply isolates the relevant
obstruction but does not so far appear to provide a more general construction
of maximal Cohen--Macaulay modules.

Finally, we give examples of families of section rings satisfying the
hypotheses of Theorem~\ref{main}, including some rings whose completions
admit no small Cohen--Macaulay algebra.
\subsection*{Notation}

Throughout the paper, ``local ring'' means Noetherian local ring. We use the standard convention for cohomologically indexed complexes as in  Weibel \cite{weibel}: for a complex $C$, its $p$-th translate $C[p]$ is defined by
$
C[p]^n = C^{n-p},
$
with differential
$
d_{C[p]}^n = (-1)^p d_C^{\,n-p}.
$ Let $D(A)$ denote the derived category of $A$-modules, and let
$D_c^b(A)$ denote the full subcategory of $D(A)$ consisting of complexes
with bounded, finitely generated cohomology.

By \cite[Cor.~1.4]{dualizing}, a local ring admits a dualizing complex if
and only if it is a homomorphic image of a Gorenstein local ring; we use
these equivalent conditions interchangeably. If \(A\) is a homomorphic
image of a Gorenstein local ring \(R\) of dimension \(n\), then for a
finitely generated \(A\)-module \(M\) we write
$
K^i(M):=\operatorname{Ext}_R^{\,n-i}(M,R)
$
for the \(i\)-th deficiency module of \(M\).
 
For a local ring \((A,\mathfrak m,k)\), we denote by \((-)^\vee\) the
Matlis dual functor
$
(-)^\vee:=\operatorname{Hom}_A(-,E_A(k)),
$
where \(E_A(k)\) is the injective hull of \(k\) over \(A\). For a
\(k\)-vector space \(V\), we write \(V'\) for its \(k\)-linear dual.

\section{The Cone Construction}
\begin{proof}[Proof of Theorem \ref{main}]
Let $D_A$ denote a dualizing complex for $A$, normalized so that
$
H^{-i}(D_A)=K^i(A);
$
see \cite[Sec.~3]{canonical_module}. Set
\[
X:=D_A[d-1].
\]
Let \(\omega_A\) denote the canonical module of \(A\). Since \(\depth A=d-1\),   \(K^i(A)=0\) for
\(i\notin\{d-1,d\}\), therefore 
\[
H^{-1}(X)=\omega_A,\qquad
H^0(X)=K^{d-1}(A),\qquad
H^i(X)=0 \quad \text{for } i\notin\{-1,0\}.
\]
Denote \(C:=K^{d-1}(A)\). Since
\[
\operatorname{Hom}_{D(A)}(A,X)
=
H^0 (\mathbf{R}\!\operatorname{Hom}_A(A,X) )
\cong H^0(X)
=
C,
\]
a generator \(c\in C\) determines a morphism
$
f\colon A\to  X
$
in \(D(A)\). The induced map on $H^0$,
$
H^0(f)\colon A \to C,
$
is given by multiplication by \(c\) which  is therefore surjective.   There is an exact triangle
\begin{equation}\label{exact_triangle}
	A   \xlongrightarrow{f}X\longrightarrow \operatorname{Cone}(f)\longrightarrow A[-1].
\end{equation}
The associated long exact sequence in cohomology contains the exact sequence
\[
0\longrightarrow \omega_A
\longrightarrow H^{-1}(\operatorname{Cone}(f))
\longrightarrow A
\xrightarrow{H^0(f)} C
\longrightarrow H^0(\operatorname{Cone}(f))
\longrightarrow 0.
\]
Since \(H^0(f)\colon A\to C\) is surjective, we have \(H^0(\operatorname{Cone}(f))=0\). Hence \(\operatorname{Cone}(f)\) has cohomology concentrated in degree \(-1\), so
\[
\operatorname{Cone}(f)\simeq M[-1],
\qquad
M:=H^{-1}(\operatorname{Cone}(f)).
\]
Moreover,
the above long exact sequence yields a short exact sequence
\[
0\longrightarrow \omega_A
\longrightarrow M
\longrightarrow \operatorname{Ann}_A(C)
\longrightarrow 0.
\]
Define
\[
\mathbb{D}(-):=\mathbf{R}\!\operatorname{Hom}_A(-,D_A)[d-1].
\]
By \cite[Prop.~V.2.1]{duality} or \cite[\href{https://stacks.math.columbia.edu/tag/0A7C}{Tag 0A7C}]{stacks-project}, the functor $\mathbb{D}$ is a
contravariant equivalence on $D_c^b(A)$ and satisfies biduality
$
\mathbb{D}^2\simeq \operatorname{id}.
$
By construction,
\[
\mathbb{D}(A)=X
\qquad\text{and}\qquad
\mathbb{D}(X)\simeq A.
\]
Let
$
\delta_A\colon A \xrightarrow{\simeq} \mathbb{D}^2(A)
=\mathbb{D}(X)
$
be the biduality isomorphism. For
$f\in \operatorname{Hom}_{D(A)}(A,X)$, define its transpose by
\[
f^\dagger\colon
A \xrightarrow{\delta_A} \mathbb{D}(X)
\xrightarrow{\mathbb{D}(f)} \mathbb{D}(A)=X.
\]
Thus we obtain a map
\[
\tau\colon
\operatorname{Hom}_{D(A)}(A,X)
\longrightarrow
\operatorname{Hom}_{D(A)}(A,X),
\qquad
f\longmapsto f^\dagger.
\]
Since $\mathbb{D}$ is a contravariant equivalence,  $\tau$ is a bijection. Moreover, $\tau$ is $A$-linear.
Thus, under the natural isomorphism
$
\operatorname{Hom}_{D(A)}(A,X)
\cong H^0(X)=C,
$
the operation $f\mapsto f^\dagger$ induces an $A$-module automorphism
of $C$.
Therefore, \(f^\dagger\) corresponds to another generator of \(C\), hence
$
H^0(f^\dagger)\colon A\to C
$
is also surjective. By the same argument as above, \(\operatorname{Cone}(f^\dagger)\) has cohomology concentrated in degree \(-1\). Hence
\[
\operatorname{Cone}(f^\dagger)\simeq N[-1],
\qquad
N:=H^{-1} \bigl(\operatorname{Cone}(f^\dagger)\bigr).
\]
Applying \(\mathbb{D}\) to the exact triangle \eqref{exact_triangle} gives the following  exact triangle after rotations 
\[
A\xrightarrow{f^\dagger}X\longrightarrow \mathbb{D}(M)\longrightarrow A[-1].
\]
Therefore,
\[
\mathbb{D}(M)\simeq \operatorname{Cone}(f^\dagger)\simeq N[-1],
\]
and hence
\[
\mathbf{R}\!\operatorname{Hom}_A(M,D_A)\simeq N[-d].
\]
By local duality \cite[Thm.~V.6.2]{duality}, we have
\[
H_{\mathfrak m}^i(M)
\cong
H^{-i}\!\left(\mathbf{R}\!\operatorname{Hom}_A(M,D_A)\right)^\vee
\cong
\begin{cases}
	N^\vee, & i=d,\\
	0, & i\neq d.
\end{cases}
\]  It follows that
  \(M\) is a  maximal Cohen--Macaulay \(A\)-module.
\end{proof}

\begin{remark}
		The deficiency module \(K^{d-1}(A)\) is also related to the non-\(S_2\)
		locus.
	In the setup of Theorem~\ref{main}, suppose additionally that \(A\) is
	equidimensional and unmixed and that its non-\(S_2\) locus is nonempty.
	Set
	$
	B:=\operatorname{End}_A(\omega_A)$,
	$
	C:=B/A.
	$
	Then \(\dim C=d-2\), as in the proof of
	\cite[Thm.~4.7]{xie_connectedness}, and the connecting morphism
	$
	K^{d-1}(A)\to K^{d-2}(C)
	$
	is the \(S_2\)-hull map by
	\cite[Thm.~4.3]{xie_connectedness}.  Thus, if
	\(K^{d-1}(A)\) is moreover equidimensional and \(S_2\), then its
	cyclicity implies that the top-dimensional part of the non-\(S_2\) locus
	is connected in codimension one by
	\cite[Thm.~3.2]{xie_connectedness}.
\end{remark}
\subsection{Identification of the cone construction}

We now give a more explicit description of the cone module constructed above. We begin with the following lemma, which identifies the cone module with the kernel of a natural map.

\begin{lemma}\label{cone_identification}
	Let \(A\) be a commutative ring, and let \(X\in D(A)\) with
\[
H^{-1}(X)=U,\qquad
H^0(X)=V,\qquad
H^i(X)=0 \quad \text{for } i\notin\{-1,0\},
\]
where \(U\) and \(V\) are finitely generated \(A\)-modules. Let   
$
	\alpha\colon F\twoheadrightarrow V
$
	be a surjection with \(F\)  a
	projective \(A\)-module. Suppose that \(x\in A\) is a nonzerodivisor on both \(A\)
	and \(U\), and that \(xV=0\). Set
	\[
	B:=A/xA,
	\qquad
	W:=H^0\!\left(\mathbf{R}\!\operatorname{Hom}_A(B,X)\right).
	\]
	Since \(F\) is projective, there is a natural isomorphism
	$
	\operatorname{Hom}_{D(A)}(F,X)
	\cong
	\operatorname{Hom}_A(F,V).
$
	Let
$
	f\colon F\to X
$
	be the morphism in \(D(A)\) corresponding to \(\alpha\), and set
	$
	M_f:=H^{-1}\!\left(\operatorname{Cone}(f)\right).
$
	Then the following hold.
	
	\begin{enumerate}
		\item There is a canonical short exact sequence
		\[
		0\longrightarrow U/xU
		\xlongrightarrow{\overline{\tau}}
		W
		\xlongrightarrow{\pi}
		V
		\longrightarrow 0.
		\]
		
		\item There is  a lift
		$
		\widetilde{\alpha}\colon F\to W
		$
		of \(\alpha\),  so that
		$
		\pi\circ\widetilde{\alpha}=\alpha 
		$ and the following holds. 
		Define
		\[
		\Phi\colon U\oplus F\longrightarrow W,
		\qquad
		\Phi(u,z)
		=
		\overline{\tau}(\overline{u})+\widetilde{\alpha}(z),
		\]
		where \(\overline{u}\) denotes the image of \(u\) in \(U/xU\). Then \(\Phi\)
		is surjective, and 
		$
		M_f\cong\ker(\Phi).
		$
		Moreover, \(M_f\) fits into a short exact sequence
		\[
		0\longrightarrow U
		\longrightarrow M_f
		\longrightarrow \ker(\alpha)
		\longrightarrow 0.
		\]

	\end{enumerate}
\end{lemma}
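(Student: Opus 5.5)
The plan is to compute $W$ by applying $\mathbf{R}\!\operatorname{Hom}_A(-,X)$ to the resolution $0\to A\xrightarrow{x}A\to B\to 0$. Since $x$ is a nonzerodivisor on $A$, this gives an exact triangle
\[
\mathbf{R}\!\operatorname{Hom}_A(B,X)\longrightarrow X\xrightarrow{\ x\ } X\longrightarrow .
\]
Its long exact cohomology sequence reads
\[
\cdots\to H^{-1}(X)\xrightarrow{x}H^{-1}(X)\to H^0(\mathbf{R}\!\operatorname{Hom}_A(B,X))\to H^0(X)\xrightarrow{x}H^0(X)\to\cdots.
\]
Injectivity of $x$ on $U$, together with $xV=0$, then gives part (1): the map $\overline{\tau}$ is the connecting map out of $U/xU$, and $\pi$ is induced by $B\leftarrow A$, i.e.\ by restriction along $A\to B$. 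Concretely, I will represent $X$ by a complex $X^{-1}\xrightarrow{d}X^0$ truncated to degrees $[-1,0]$. Then $W$ is $H^0$ of the mapping cone of $x$ on this complex, which can be written as
\[
W=\{(a,b)\in X^0\oplus X^{-1}:\ da\ \text{compatible}\ldots\}/\sim .
\]
I would mostly avoid such formulas and keep things triangle-theoretic where possible.

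For part (2), I first construct $\widetilde\alpha$. Since $F$ is projective and $W\to V$ is surjective, we can lift $\alpha$ to $\widetilde\alpha\colon F\to W$. The key point, however, is to choose the lift compatibly with $f$. The cleaner choice is as follows. Since $xV=0$, the image of $f$ composed with $x\colon X\to X$ is zero: indeed $\operatorname{Hom}_{D(A)}(F,X)=\operatorname{Hom}_A(F,V)$, and $x\alpha=0$. Hence $f$ factors through $\mathbf{R}\!\operatorname{Hom}_A(B,X)\to X$. Any such factorization $\tilde f\colon F\to\mathbf{R}\!\operatorname{Hom}_A(B,X)$ gives on $H^0$ a map $\widetilde\alpha$ with $\pi\widetilde\alpha=\alpha$. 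Surjectivity of $\Phi$ then follows from (1) by a snake/five-lemma argument, since $\alpha$ is surjective and $\overline\tau(U/xU)=\ker\pi$. The second exact sequence comes, exactly as in the proof of Theorem~\ref{main}, from the cohomology sequence of $F\to X\to\operatorname{Cone}(f)$: we have $H^{-1}(F)=0$ and $H^0(\operatorname{Cone}(f))=0$ because $\alpha$ is onto, which yields $0\to U\to M_f\to\ker\alpha\to 0$.

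The main step, and I expect the obstacle, is the identification $M_f\cong\ker\Phi$. My plan is to compare the two short exact sequences
\[
0\to U\to M_f\to\ker\alpha\to0
\qquad\text{and}\qquad
0\to U\to\ker\Phi\to\ker\alpha\to0.
\]
The second sequence exists because $(u,z)\in\ker\Phi$ forces $\alpha(z)=\pi\Phi(u,z)=0$. The kernel of the projection $\ker\Phi\to\ker\alpha$ is $\{u:\overline{\tau}(\overline u)=0\}=xU\cong U$; this copy of $U$ sits inside via $u\mapsto(xu,0)$. Surjectivity of $\ker\Phi\to\ker\alpha$ holds because for $z\in\ker\alpha$ the element $\widetilde\alpha(z)$ lies in $\ker\pi=\operatorname{im}\overline\tau$.

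To produce an actual map $M_f\to\ker\Phi$ rather than just matching extensions, I will use the octahedral axiom on the composite $F\xrightarrow{\tilde f}\mathbf{R}\!\operatorname{Hom}_A(B,X)\to X$. This composite is $f$, and the cone of the second map is $X$, via the triangle above, i.e.\ $\operatorname{Cone}\simeq X$ up to the $x$-map. The octahedron then gives a triangle relating $\operatorname{Cone}(\tilde f)$, $\operatorname{Cone}(f)$ and $X$. Taking $H^{-1}$ will exhibit $M_f$ as the kernel of a map
\[
U\oplus F\to W,
\]
and this map should be checked to be $\Phi$. The delicate part is sign and identification bookkeeping: one must verify that the connecting map appearing there is multiplication by $x$ on $U$ composed with $\overline\tau$. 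If that becomes messy, the fallback is to do everything with the explicit two-term complex model of $X$ and write down the map $M_f\to U\oplus F$ by hand.
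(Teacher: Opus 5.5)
The identification $M_f\cong\ker\Phi$ is not actually proved. That is the real content of the lemma, and you leave it at ``should be checked''. Everything before it is fine: part (1), the lift $\widetilde\alpha=H^0(\tilde f)$ obtained by factoring $f$ through $g\colon Y:=\mathbf{R}\!\operatorname{Hom}_A(B,X)\to X$ (the derived counterpart of the paper's homotopy $t$ with $\delta t=xq$), surjectivity of $\Phi$, and the sequence $0\to U\to M_f\to\ker\alpha\to 0$.

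As you note, two extensions of $\ker\alpha$ by $U$ need not be isomorphic, so everything rests on a comparison map. The octahedron does not supply one in the way you describe. Since $H^{-1}(Y)=(0:_Ux)=0$, taking $H^{-1}$ of the triangle $\operatorname{Cone}(\tilde f)\to\operatorname{Cone}(f)\xrightarrow{v}\operatorname{Cone}(g)\simeq X\to\operatorname{Cone}(\tilde f)[-1]$ (shifts as in the paper) gives only
\[
0\to\ker\widetilde\alpha\to M_f\xrightarrow{H^{-1}(v)}U\to\operatorname{coker}\widetilde\alpha\to 0.
\]
This involves no map out of $U\oplus F$ and determines $M_f$ only up to an extension. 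Also, the map $U\to\operatorname{coker}\widetilde\alpha$ here is induced by $\tau$ itself. Composing with multiplication by $x$, as you propose to check, would give zero, since $\overline\tau(\overline{xu})=0$. Multiplication by $x$ enters instead as the composite $X\to\operatorname{Cone}(f)\xrightarrow{v}\operatorname{Cone}(g)\simeq X$.

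What is missing is a pair of compatibilities. Let $\partial_f\colon\operatorname{Cone}(f)\to F[-1]$ and $\partial\colon X\to Y[-1]$ be the third maps of the triangles on $f$ and on $Y\xrightarrow{g}X\xrightarrow{x}X$. Completing $(\tilde f,1_X)$ to a morphism of triangles (TR3 suffices) gives $v\colon\operatorname{Cone}(f)\to X$ with
\[
v\circ(X\to\operatorname{Cone}(f))=x,\qquad \partial\circ v=\tilde f[-1]\circ\partial_f .
\]
Set $j=(H^{-1}(v),H^{-1}(\partial_f))\colon M_f\to U\oplus F$.

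The second identity gives $\tau\circ H^{-1}(v)=\pm\,\widetilde\alpha\circ H^{-1}(\partial_f)$. So after a sign change on the $U$-factor, $j$ lands in $\ker\Phi$. The first identity shows that $j$ maps $U\subseteq M_f$ isomorphically onto $xU\oplus 0$; this is where $x$ regular on $U$ is used. Finally, $j$ induces the identity on $\ker\alpha$, so your five-lemma comparison goes through. That would be a legitimate triangulated alternative to the paper, but without these identities the argument is incomplete.

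Your fallback is exactly the paper's proof. Write $X=[P\xrightarrow{\delta}Q]$, lift $\alpha$ to $q$, and choose $t$ with $\delta t=xq$. Set $\widetilde\alpha(z)=[(t(z),q(z))]$ and $\Theta(p,z)=(xp-t(z),z)$. Then verify directly that $\Phi\Theta=0$, that $\Theta$ is injective (using $x$ regular on $U$), and that $\Theta$ maps onto $\ker\Phi$.
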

\begin{proof}
By taking a good truncation of \(X\), see \cite[1.2.7, p.~9]{weibel},
we may represent \(X\) by a two-term complex
$
X=\bigl[P\xrightarrow{\delta}Q\bigr],
$
where \(P\) is placed in degree \(-1\) and \(Q\) in degree \(0\). Thus
\[
U=\ker(\delta),
\qquad
V=\operatorname{coker}(\delta).
\] 
	Since \(F\) is projective, we may lift
$
\alpha\colon F\to V
$
to a map
$
q\colon F\to Q 
$
which represents the derived morphism
\(f\colon F\to X\) corresponding to \(\alpha\). Hence
\begin{equation}\label{M_f}
M_f:=H^{-1}\bigl(\operatorname{Cone}(f)\bigr)
=
\left\{
(p,z)\in P\oplus F
\;\middle|\;
\delta(p)-q(z)=0
\right\}.
\end{equation}

Moreover, since \(\alpha\) is surjective and
$
V=\operatorname{coker}(\delta)=Q/\operatorname{im}(\delta),
$
we have $ Q=\operatorname{im}( q)+\operatorname{im}(\delta)$, and hence
$
H^0\bigl(\operatorname{Cone}(f)\bigr)=0.
$
Thus
$
\operatorname{Cone}(f)\simeq M_f[-1].
$
Projection onto the second factor \(F\)  induces a surjection \(M_f\twoheadrightarrow \ker(\alpha) \) whose kernel is naturally identified with $U=\ker \delta$. Hence, we have an exact sequence
\[
0\longrightarrow U
\longrightarrow M_f
\longrightarrow \ker(\alpha)
\longrightarrow 0.
\]

Since \(xV=0\), we have \(xq(F)\subseteq \operatorname{im}(\delta)\). Since \(F\) is projective, the map \(xq\colon F\to \operatorname{im}(\delta)\) lifts through the surjection \(\delta\colon P\twoheadrightarrow \operatorname{im}(\delta)\). Thus there exists a map \(t\colon F\to P\) such that \begin{equation}\label{delta_equality}
	\delta\circ t=xq. 
\end{equation}
Indeed, this \(t\) is precisely a chain homotopy from \(xf\) to \(0\). 
Applying the contravariant functor 
\(\mathbf{R} \operatorname{Hom}_A(-,X)\) 
to the exact triangle
$
A \xrightarrow{x} A \to B \to A[-1]
$
yields the exact triangle
\begin{equation}\label{derived_triangle}
\mathbf{R}\!\operatorname{Hom}_A(B,X)
\to
\mathbf{R}\!\operatorname{Hom}_A(A,X)
\xrightarrow{x}
\mathbf{R}\!\operatorname{Hom}_A(A,X)
\to 
\mathbf{R}\!\operatorname{Hom}_A(B,X)[-1].
\end{equation}
Since \(\mathbf{R}\!\operatorname{Hom}_A(A,X)\simeq X\), we obtain
\[
\operatorname{Cone}(x\colon X\to X)
\simeq
\mathbf{R}\!\operatorname{Hom}_A(B,X)[-1].
\]
The cone of \(x\colon X\to X\) is represented by the complex
\[
0\longrightarrow P
\xrightarrow{d^{-2}}
Q\oplus P
\xrightarrow{d^{-1}}
Q
\longrightarrow 0,
\]
where
\[
d^{-2}(r)=(-\delta r,-xr)
\qquad\text{and}\qquad
d^{-1}(v,s)=-xv+\delta s.
\]
Therefore,
\[
W
=
H^0\mathbf{R}\!\operatorname{Hom}_A(B,X)
=
H^{-1}\operatorname{Cone}(x\colon X\to X)
\cong
\frac{\{(s,v)\in P\oplus Q:\delta (s)-xv=0\}}
{\{(xr,\delta r):r\in P\}}.
\]
The long exact sequence of cohomology associated to the exact triangle
\eqref{derived_triangle} contains
\[
H^{-1}(X)\xrightarrow{x}H^{-1}(X)
\xrightarrow{\tau}
H^0\mathbf{R}\!\operatorname{Hom}_A(B,X)
\xrightarrow{\pi}
H^0(X)\xrightarrow{x}H^0(X).
\]
This yields the short exact sequence
\begin{equation}\label{exact_first}
	0\longrightarrow U/xU
	\xlongrightarrow{\overline{\tau}}
	W
	\xlongrightarrow{\pi}
	V
	\longrightarrow  0.
\end{equation}
Under the above description of \(W\), the maps \(\tau\) and \(\pi\) are given by
\[
\tau(u)=[(u,0)]
\qquad\text{and}\qquad
\pi([(s,v)])=\overline{v},
\]
where \(\overline{v}\) denotes the image of \(v\in Q\) in
\(V=Q/\operatorname{im}\delta\). Indeed, under the identification
$\mathbf{R}\!\operatorname{Hom}_A(B,X)\simeq
\operatorname{Cone}(x\colon X\to X)[-1]$, these maps are induced by the
canonical inclusion and projection in the short exact sequence of complexes
\[
0\longrightarrow X
\longrightarrow \operatorname{Cone}(x\colon X\to X)
\longrightarrow X[-1]
\longrightarrow 0,
\]
see \cite[1.5.2, p.~19 and p.~371]{weibel}.

For \(z\in F\), by \eqref{delta_equality}, we have
\(\delta(t(z))=xq(z)\). Hence \((t(z),q(z))\) is a \((-1)\)-cycle in
\(\operatorname{Cone}(x\colon X\to X)\). Define
\[
\widetilde{\alpha}\colon F\longrightarrow W,
\qquad
\widetilde{\alpha}(z)=[(t(z),q(z))].
\]
Then
$
\pi\bigl(\widetilde{\alpha}(z)\bigr)
=[q(z)]
=\alpha(z),
$
so \(\widetilde{\alpha}\) is indeed a lift of \(\alpha\).
Then 
$
\Phi\colon U\oplus F\longrightarrow W$ ,
$\Phi(u,z)
=
\overline{\tau}( {\overline{u}})+\widetilde{\alpha}(z)$ is given by $\Phi(u,z) =[(u+t(z), q(z))]$. Using the exact sequence \eqref{exact_first} and   that
$\alpha=\pi\circ\widetilde{\alpha}$ is surjective, it follows that
$\Phi$ is surjective.

Define the homomorphism
\[
\Theta\colon M_f\longrightarrow U\oplus F,
\qquad
\Theta(p,z)=(xp-t(z),z).
\]
First, \(\Theta\) is well defined. Indeed, for \((p,z)\in M_f\), by
\eqref{M_f} and \eqref{delta_equality}, we have
\[
\delta(xp-t(z))
=x\delta(p)-\delta(t(z))
=xq(z)-xq(z)
=0.
\]
Hence \(xp-t(z)\in\ker\delta=U\).
Next,
\[
\Phi\circ\Theta(p,z)
=
\Phi(xp-t(z),z)
=
[(xp,q(z))]
=
[(xp,\delta(p))].
\]
Since \((xp,\delta(p))\) is a boundary in
\(\operatorname{Cone}(x\colon X\to X)\), it follows that
\(\Phi\circ\Theta=0\). Thus
$
\operatorname{im}\Theta\subseteq\ker\Phi.
$

We next prove that \(\Theta\) is injective. Suppose that
\(\Theta(p,z)=0\). Then \(z=0\) and \(xp=0\). Since
\((p,0)\in M_f\), \eqref{M_f} gives \(\delta(p)=0\), so \(p\in U\).
As \(x\) is regular on \(U\), we have \(p=0\).

Finally, we show that \(\Theta\) is surjective onto \(\ker\Phi\).
Let \((u,z)\in\ker\Phi\). Then
$
[(u+t(z),q(z))]=0
$
in \(W\). Hence there exists \(p\in P\) such that
$
u+t(z)=xp$,
$
q(z)=\delta(p).
$
It follows that \((p,z)\in M_f\), and
$
\Theta(p,z)
=
(xp-t(z),z)
=
(u,z).
$

Therefore,
$
\Theta\colon M_f\xrightarrow{\sim }\ker\Phi
$
is an isomorphism. This finishes the proof. 
\end{proof}

\begin{remark}
	If \(x\) is not regular on \(U\), the same construction yields an exact sequence
	\[
	0\longrightarrow (0:_U x)
	\longrightarrow M_f
	\xrightarrow{\Theta}
	\ker\Phi
	\longrightarrow 0.
	\]
\end{remark}
In \cite{tavanfar_Shimomoto}, Tavanfar and Shimomoto proved a case of the
existence of maximal Cohen--Macaulay modules which, in essence, is the
following three-dimensional result.

\begin{theorem}[{\cite[Thm.~3.2]{tavanfar_Shimomoto}}]\label{tanvanfar}
	Let \((A,\mathfrak m)\) be a  quasi-Gorenstein local ring
	which is a homomorphic image of a Gorenstein local ring of dimension $3$. Suppose that
	\(\depth A=2\) and \(H^2_{\mathfrak m}(A)\cong k\). Then, for some
	nonzerodivisor \(x\in \m\), the first syzygy module
	$
	\Omega^1\omega_{A/xA}
	$
	of the canonical module \(\omega_{A/xA}\) of \(A/xA\) is a   maximal Cohen--Macaulay  module.
\end{theorem}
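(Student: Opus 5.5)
We deduce Theorem~\ref{tanvanfar} from Theorem~\ref{main} together with Lemma~\ref{cone_identification}, specialized to the quasi-Gorenstein case.

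\textbf{Step 1: $K^2(A)$ is cyclic, and a suitable $x$ exists.} Here $d=3$, $\depth A=2$, and $A$ has a dualizing complex. By local duality, $K^2(A)\cong H^2_{\mathfrak m}(A)^\vee\cong k^\vee\cong k$, which is cyclic, so Theorem~\ref{main} applies. Moreover $\operatorname{ann}_A K^2(A)=\mathfrak m$. Since $\depth A\ge 1$, prime avoidance gives a nonzerodivisor $x\in\mathfrak m$, and this $x$ annihilates $K^2(A)$.

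\textbf{Step 2: apply Lemma~\ref{cone_identification}.} Take $X=D_A[2]$, $U=\omega_A$, $V=K^2(A)$, $F=A$, and let $\alpha\colon A\to V$ be a surjection. Quasi-Gorenstein means $\omega_A\cong A$, so $x$ is regular on $U$. Then $M_f$ is exactly the module $M$ of Theorem~\ref{main}, which is maximal Cohen--Macaulay. The lemma gives
\[
M\cong\ker\bigl(\Phi\colon \omega_A\oplus A\to W\bigr),\qquad \Phi \text{ surjective}.
\]

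\textbf{Step 3: identify $W$ with $\omega_{A/xA}$.} By adjunction,
\[
\mathbf{R}\!\operatorname{Hom}_A(B,D_A)\simeq D_B,
\]
so $\mathbf{R}\!\operatorname{Hom}_A(B,X)\simeq D_B[2]$ with $\dim B=2$. Hence $W=H^{-2}(D_B)=K^2(B)=\omega_B$.

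\textbf{Step 4: $M$ is the first syzygy of $\omega_B$.} We now have
\[
0\to M\to A^{2}\to \omega_B\to 0,
\]
using $\omega_A\cong A$. The only remaining point is to pass from this presentation to $\Omega^1\omega_B$ computed from a minimal free cover. In general $A^2$ need not be a minimal free cover of $\omega_B$, but $M\cong\Omega^1\omega_B\oplus A^{j}$ for some $j\ge 0$. Since $M$ is maximal Cohen--Macaulay and $\Omega^1\omega_B$ is a direct summand of $M$, the module $\Omega^1\omega_B$ is also maximal Cohen--Macaulay. It is nonzero, because $\omega_B$ has dimension $2<3$ and so is not free.

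\textbf{Main obstacle.} The delicate step is Step~3: verifying that the dualizing complex normalization and the shift by $d-1=2$ produce $H^0$ equal to the canonical module of $B$, and not some other deficiency module. This should follow from the normalization $H^{-i}(D_B)=K^i(B)$ inherited from the Gorenstein ring $R$, since $\operatorname{Ext}_R$ of a $B$-module is the same whether it is regarded over $A$ or over $B$.
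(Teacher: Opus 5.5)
Your proposal is correct and follows the paper's route: Proposition~\ref{cor_syzygy} specialized to $d=3$, where $K^2(A)\cong k$ and so every nonzerodivisor $x\in\mathfrak m$ annihilates it. It combines Theorem~\ref{main} with Lemma~\ref{cone_identification} and the identification $W=H^0\mathbf{R}\!\operatorname{Hom}_A(B,X)\cong\omega_B$. The one difference is the final step. The paper shows $\omega_B$ is minimally generated by two elements: if it were cyclic, then $\omega_B\cong B$ and $K^{d-1}(A)\cong B/aB$ would have dimension $d-2$, contradicting $\dim K^{d-1}(A)\le d-3$ for the $S_2$ ring $A\cong\omega_A$. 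This gives $M\cong\Omega^1\omega_B$ on the nose. Your splitting $M\cong\Omega^1\omega_B\oplus A^{j}$, together with the direct-summand argument and the observation that $\omega_B$ is not free, avoids that extra input and still yields exactly what the statement asks.
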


The proof in \cite{tavanfar_Shimomoto} is somewhat involved. In \cite{xie},
we give a simpler proof of this result.

Using Lemma \ref{cone_identification}, we can extend this result and show
that,  the cone construction in
Theorem \ref{main} is naturally identified with the first syzygy module
$
\Omega^1\omega_{A/xA} 
$
whenever \(A\) is quasi-Gorenstein,

\begin{proposition}\label{cor_syzygy}
	Let \((A,\mathfrak m)\) be a quasi-Gorenstein local ring of dimension $d$ which is a
	homomorphic image of a Gorenstein local ring. Suppose that
	$
	\depth A=d-1
	$
	and that \(K^{d-1}(A)\) is cyclic. Let \(x\in \operatorname{ann}_A K^{d-1}(A)\)
	be a nonzerodivisor on $A$. Then the first syzygy module
	$
	\Omega^1\omega_{A/xA}
	$
	is a  maximal Cohen--Macaulay \(A\)-module.
\end{proposition}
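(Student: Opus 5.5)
Let $X:=D_A[d-1]$ as in the proof of Theorem~\ref{main}, so that $U=H^{-1}(X)=\omega_A\cong A$ and $V=H^0(X)=C=K^{d-1}(A)$. Fix a generator $c\in C$ and take $F=A$ with $\alpha\colon A\twoheadrightarrow C$, $1\mapsto c$. The corresponding $f\colon A\to X$ is exactly the morphism used in the proof of Theorem~\ref{main}, so $M:=M_f=H^{-1}(\operatorname{Cone}(f))$ is already known to be maximal Cohen--Macaulay. The plan is to show $M\cong\Omega^1\omega_{A/xA}$ by means of Lemma~\ref{cone_identification}.

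First check the hypotheses of the lemma. We have $xV=0$ because $x\in\operatorname{ann}_AK^{d-1}(A)$. Also $x$ is regular on $U\cong A$, since $x$ is a nonzerodivisor on $A$. The lemma then gives a surjection $\Phi\colon \omega_A\oplus A\cong A^2\to W$ with $M\cong\ker\Phi$.

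Next identify $W=H^0(\mathbf R\operatorname{Hom}_A(B,X))$, where $B=A/xA$. We have
\[
\mathbf R\operatorname{Hom}_A(B,X)=\mathbf R\operatorname{Hom}_A(B,D_A)[d-1]\simeq D_B[d-1],
\]
since $\mathbf R\operatorname{Hom}_A(B,D_A)$ is a dualizing complex for $B$, normalized with $H^{-i}=K^i(B)$. Because $\dim B=d-1$, this gives $W\cong K^{d-1}(B)=\omega_B$. The short exact sequence in part (1) of the lemma, $0\to\omega_A/x\omega_A\to\omega_B\to C\to 0$, is consistent with this identification. So we obtain an exact sequence
\[
0\to M\to A^2\xrightarrow{\Phi}\omega_{A/xA}\to 0.
\]

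It remains to compare $\ker\Phi$ with the first syzygy. By Schanuel's lemma, $\ker\Phi\oplus F'\cong \Omega^1\omega_B\oplus A^2$ for a free module $F'$. If $\Phi$ is a minimal surjection, then $\ker\Phi\cong\Omega^1\omega_B$ on the nose. Otherwise $\ker\Phi\cong\Omega^1\omega_B\oplus A^{e}$, and $\Omega^1\omega_B$ is a direct summand of the maximal Cohen--Macaulay module $M$, hence itself maximal Cohen--Macaulay (or zero, which is harmless). Either way the proposition follows. I expect the main obstacle to be the identification $W\cong\omega_{A/xA}$: it requires the normalization of $\mathbf R\operatorname{Hom}_A(B,D_A)$ as the normalized dualizing complex of $B$, which I would justify via $K^i$ computed over the Gorenstein ring $R$.
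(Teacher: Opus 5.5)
Your proof is correct, and up to the exact sequence $0\to M\to A^2\xrightarrow{\Phi}\omega_{A/xA}\to 0$ it follows the paper's argument. Your adjunction argument for $W\cong\omega_B$, namely $\mathbf R\operatorname{Hom}_A(B,D_A)\simeq\mathbf R\operatorname{Hom}_R(B,R)[n]$, fills in a step the paper only asserts.

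The two routes diverge at the last step. The paper proves that $\Phi$ is a minimal surjection. If $\omega_B$ were cyclic, the injection $B\hookrightarrow\omega_B$ would force $\omega_B\cong B$. Then $K^{d-1}(A)\cong B/aB$ with $a$ regular, so $\dim K^{d-1}(A)=d-2$. This contradicts $\dim K^{d-1}(A)\le d-3$, which holds because $A\cong\omega_A$ is $S_2$ \cite[Prop.~3.1]{canonical_module}.

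You avoid minimality altogether. You write $\ker\Phi\cong\Omega^1\omega_B\oplus A^e$, which makes $\Omega^1\omega_B$ a direct summand of the maximal Cohen--Macaulay module $M$. This splitting is the standard decomposition of a non-minimal free cover over a local ring, not Schanuel's lemma itself; Schanuel alone only gives $\ker\Phi\oplus A^{\mu}\cong\Omega^1\omega_B\oplus A^2$. The conclusion is still fine.

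Your route uses less input, since it needs no bound on $\dim K^{d-1}(A)$. As written, however, it proves only the stated claim, not the identification $M\cong\Omega^1\omega_B$ that the paper also obtains. One more line recovers that as well. If $e\ge 1$, then $H^{d-1}_{\mathfrak m}(A)$ is a direct summand of $H^{d-1}_{\mathfrak m}(M)=0$, contradicting $\depth A=d-1$. Hence $e=0$ and $\Phi$ is minimal, without appealing to the dimension bound on deficiency modules.
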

\begin{proof}
	Since \(A\) is quasi-Gorenstein, we have \(\omega_A\cong A\). The complex
	\(X\) appearing in the proof of Theorem \ref{main} then satisfies
$
	H^{-1}(X)=\omega_A\cong A$,
 $
	H^0(X)=K^{d-1}(A),
$
and $H^i(X)=0$, $i\notin\{-1,0\}$.
	Moreover,
	$
	H^0\mathbf{R}\!\operatorname{Hom}_A(B,X)\cong \omega_B,
	$
	where \(B=A/xA\). Hence, by Lemma \ref{cone_identification}, there is a
	short exact sequence
\begin{equation}\label{cor_exact}
		0\longrightarrow A/xA
	\xrightarrow{\overline{\tau}}
	\omega_B
	\longrightarrow
	K^{d-1}(A)
	\longrightarrow 0.
\end{equation}
	
	Let \(\alpha\colon A\to K^{d-1}(A)\) be a surjection, and let
	\(\widetilde{\alpha}\colon A\to\omega_B\) be the lift given by
	Lemma \ref{cone_identification}. Then the map
	$
	\Phi\colon A^2\to\omega_B$,
$
	\Phi(u,z)
	=
	\overline{\tau}(\overline{u})
	+\widetilde{\alpha}(z),
	$
	is surjective, and Lemma \ref{cone_identification} gives
	$
	M_f:= H^{-1}\bigl(\operatorname{Cone}(f)\bigr)\cong\ker\Phi.
	$
	By Theorem \ref{main}, \(M_f\) is   
	maximal Cohen--Macaulay.
	
It remains to show that \(M_f\) is the first syzygy
module of \(\omega_B\). It suffices to show that \(\omega_B\) is minimally generated by two
elements. Suppose otherwise that \(\omega_B\) is generated by one
element, and hence is cyclic.  Since
$B\hookrightarrow \omega_B$, it follows that $\omega_B\cong B$.
Hence \eqref{cor_exact} gives 
 $K^{d-1}(A)\cong B/aB$, for some nonzerodivisor $a\in B$, and therefore
$
\dim K^{d-1}(A)=\dim B-1=d-2.
$
On the other hand, since \(A\cong\omega_A\) is
\(S_2\),   by \cite[Prop.~3.1]{canonical_module},
$
\dim K^{d-1}(A)\leq d-3,
$
a contradiction. Therefore, \(\omega_B\) is minimally generated by two
elements  and  
$
M_f
\cong
\ker\Phi
\cong
\Omega^1\omega_B.
$
\end{proof}

\subsection{Higher syzygies of lower-dimensional Cohen--Macaulay modules}\label{higher_syzygy}

In the setup of Proposition \ref{cor_syzygy}, the exact sequence
\[
0\longrightarrow M_f\longrightarrow A^2\longrightarrow \omega_B\longrightarrow 0
\]
gives
$
\depth_A\omega_B
\geq
\min\{\depth_A M_f-1,\depth A\}
\geq d-1.
$
Hence \(\omega_B\) is a Cohen--Macaulay \(A\)-module of dimension \(d-1\).

There are many natural sources of Cohen--Macaulay modules of lower
dimension. For example, in the introduction of our previous work \cite{xie},
we recorded the fact that if \((A,\mathfrak m)\) is quasi-Gorenstein and is
a homomorphic image of a Gorenstein local ring of dimension \(d\), with
\(\depth A=d-1\), then \(K^{d-1}(A)\) is Cohen--Macaulay of dimension
\(d-3\). Another example is obtained by taking a quotient of \(A\) of
dimension \(2\); its \(S_2\)-ification is then a Cohen--Macaulay module of
dimension \(2\). The condition below isolates the obstruction to obtaining
a maximal Cohen--Macaulay module from a higher syzygy, but it does not
appear to hold in general for such lower-dimensional Cohen--Macaulay modules
and therefore does not seem to yield a broader construction of maximal
Cohen--Macaulay modules.
\begin{proposition}\label{prop_syzygy}
	Let \(A\) be a local ring of dimension \(d\) which is a
	homomorphic image of a Gorenstein local ring. Suppose \(\depth A=d-1\),
	and let \(L\) be a finitely generated Cohen--Macaulay \(A\)-module of
	dimension \(d-r\), \(r\ge 1\). Then
$
	\depth \Omega^i L\geq d-1
	$
	for every \(i\geq r-1\). Moreover, \(\Omega^r L\) is maximal
	Cohen--Macaulay if and only if the natural map
	\[K^{d-1}\bigl(\Omega^{r-1}L\bigr)
	\to
	K^{d-1}(A)^{\beta_{r-1}}\]
	is surjective, where \(\beta_i \) denotes the \(i\)-th
	Betti number of \(L\).
\end{proposition}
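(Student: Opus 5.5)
We argue with a minimal free resolution $F_\bullet \to L$, $F_i=A^{\beta_i}$, and the short exact sequences
\[
0\longrightarrow \Omega^{i+1}L\longrightarrow A^{\beta_i}\longrightarrow \Omega^iL\longrightarrow 0 ,
\]
using the depth lemma for the depth estimates and the long exact sequence of deficiency modules (equivalently, local duality) for the criterion. The deficiency modules enter through the contravariant functor $\operatorname{Ext}_R^{n-\bullet}(-,R)$. Since $\depth A=d-1$, we have $K^j(A)=0$ for $j\notin\{d-1,d\}$. Since $L$ is Cohen--Macaulay of dimension $d-r$, we have $K^j(L)=0$ for $j\neq d-r$.

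For the depth bound we induct on $i$ with the depth lemma $\depth \Omega^{i+1}L\ge \min\{\depth \Omega^iL+1,\ \depth A\}$. Starting from $\depth L=d-r$, this gives $\depth\Omega^iL\ge \min\{d-r+i,\,d-1\}$, which is at least $d-1$ once $i\ge r-1$. (The estimate $\depth\Omega^{i+1}L\ge\min\{\depth\Omega^iL+1,\depth A\}$ holds for any short exact sequence $0\to N\to F\to N'\to 0$ with $F$ free.) Thus $\depth\Omega^{r-1}L\ge d-1$ and $\depth\Omega^rL\ge d-1$.

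For the criterion, set $N=\Omega^{r-1}L$ and take the sequence $0\to\Omega^rL\to A^{\beta_{r-1}}\to N\to0$. The long exact sequence of local cohomology contains
\[
H^{d-1}_{\m}(\Omega^rL)\longrightarrow H^{d-1}_{\m}(A)^{\beta_{r-1}}\longrightarrow H^{d-1}_{\m}(N)\longrightarrow H^{d}_{\m}(\Omega^rL)\longrightarrow\cdots .
\]
Since $\depth N\ge d-1$, we have $H^{d-2}_{\m}(N)=0$, so $H^{d-1}_{\m}(\Omega^rL)\hookrightarrow H^{d-1}_{\m}(A)^{\beta_{r-1}}$. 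Hence
\[
H^{d-1}_{\m}(\Omega^rL)=\ker\bigl(H^{d-1}_{\m}(A)^{\beta_{r-1}}\to H^{d-1}_{\m}(N)\bigr).
\]
Because $\depth\Omega^rL\ge d-1$ and $\dim \Omega^rL\le d$, the module $\Omega^rL$ is maximal Cohen--Macaulay exactly when $H^{d-1}_{\m}(\Omega^rL)=0$. There is one edge case: if $\Omega^rL=0$, the statement is degenerate, and we treat that case separately. Otherwise $\Omega^rL$ is a nonzero submodule of a free module, so its dimension is $d$ as long as the relevant primes are minimal, and in any case depth $\ge d-1$ together with vanishing of $H^{d-1}_\m$ suffices. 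So $\Omega^rL$ is maximal Cohen--Macaulay if and only if $H^{d-1}_{\m}(A)^{\beta_{r-1}}\to H^{d-1}_{\m}(N)$ is injective.

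Applying Matlis duality together with local duality $H^{d-1}_{\m}(-)^\vee\cong K^{d-1}(-)$ (as in the proof of Theorem \ref{main}, after completing if necessary), injectivity becomes surjectivity of the functorially induced map $K^{d-1}(N)\to K^{d-1}(A)^{\beta_{r-1}}$. Alternatively, we can read this off directly from the Ext long exact sequence
\[
\cdots\to K^{d-1}(N)\to K^{d-1}(A)^{\beta_{r-1}}\to K^{d-1}(\Omega^rL)\to K^{d-2}(N)=0,
\]
where $K^{d-2}(N)=0$ follows from $\depth N\ge d-1$. The main point requiring care is the naturality: we must check that the map in the statement, which is induced by $A^{\beta_{r-1}}\to \Omega^{r-1}L$, is the Matlis dual of the local cohomology map. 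This follows from the functoriality of local duality. The exactness at $K^{d-1}(\Omega^rL)$ in the display above then gives $K^{d-1}(\Omega^rL)=0$ precisely when the map is surjective, which completes the argument.
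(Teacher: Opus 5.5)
Your proof is correct and follows the paper's argument: the depth lemma along the minimal free resolution, the local cohomology sequence in degree $d-1$ using $H^{d-2}_{\m}(\Omega^{r-1}L)=0$, and local/Matlis duality plus faithful flatness of completion. Your alternative via the $\operatorname{Ext}_R$ sequence with $K^{d-2}(\Omega^{r-1}L)=0$ is the dual form of the same step and bypasses the naturality check; the degenerate case $\Omega^rL=0$ that you flag but never treat cannot occur, since then $\operatorname{pd}_A L\le r-1<r=\dim A-\dim L$, contradicting the Intersection Theorem.
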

\begin{proof}
Denote by \(Z_i:=\Omega^iL\) the \(i\)-th syzygy of \(L\). Let
\(F_\bullet\) be a minimal free resolution of \(L\), with
\(F_i=A^{\beta_i}\). For each \(i\geq 0\), we have a short exact sequence
\begin{equation}\label{exact_syzygy}
	0\longrightarrow Z_{i+1}
	\longrightarrow F_i
	\longrightarrow Z_i
	\longrightarrow 0.
\end{equation}
Then,
$
\depth Z_{i+1}
\geq
\min\{\depth F_i,\depth Z_i+1\}
=
\min\{d-1,\depth Z_i+1\}.
$
Since
$
\depth Z_0=\depth L=d-r,
$
 we have 
$
\depth Z_i\geq d-1$ 
 for all $i\geq r-1$.
 
The long exact sequence of local cohomology associated to
\eqref{exact_syzygy} with \(i=r-1\) contains
\[
0
\longrightarrow
H_{\mathfrak m}^{d-1}(Z_r)
\longrightarrow
H_{\mathfrak m}^{d-1}(F_{r-1})
\longrightarrow
H_{\mathfrak m}^{d-1}(Z_{r-1}).
\]
So \(Z_r\) is maximal
Cohen--Macaulay if and only if
  the natural map
$
H_{\mathfrak m}^{d-1}(F_{r-1})
\to
H_{\mathfrak m}^{d-1}(Z_{r-1})
$
is injective. By local duality, this is equivalent to the surjectivity of
$
\widehat{K^{d-1}(Z_{r-1})}
\to
\widehat{K^{d-1}(F_{r-1})}.
$
which is equivalent to the surjectivity of
$
K^{d-1}(Z_{r-1})
\to
K^{d-1}(F_{r-1})
=
K^{d-1}(A)^{\beta_{r-1}} 
$ by faithful flatness of completion. 
\end{proof}

\begin{corollary}
In the setup of Theorem \ref{tanvanfar}, the first syzygy 
	\(\Omega^1\omega_{A/xA}\) is maximal Cohen--Macaulay.
\end{corollary}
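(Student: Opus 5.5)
The plan is to deduce the statement from Proposition~\ref{cor_syzygy}, or alternatively from Proposition~\ref{prop_syzygy} with $r=1$ and $L=\omega_{A/xA}$. In the setup of Theorem~\ref{tanvanfar}, $A$ is quasi-Gorenstein of dimension $d=3$ with $\depth A=2=d-1$, and it is a homomorphic image of a Gorenstein local ring. So the hypotheses of Proposition~\ref{cor_syzygy} hold once we show two things: $K^{2}(A)$ is cyclic, and there is a nonzerodivisor $x\in\operatorname{ann}_A K^2(A)$.

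\textbf{Cyclicity.} By local duality, $K^2(A)\otimes_A\widehat A\cong \operatorname{Hom}_A(H^2_{\mathfrak m}(A),E_A(k))$. Since $H^2_{\mathfrak m}(A)\cong k$, the right-hand side is $k^\vee\cong k$. Hence $K^2(A)$ is a finitely generated module whose completion is $k$. By faithful flatness of completion, $\ell(K^2(A))=1$, so $K^2(A)\cong k$, which is cyclic.

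\textbf{Choice of $x$.} The annihilator of $K^2(A)\cong k$ is $\mathfrak m$. Since $\depth A=2>0$, the ideal $\mathfrak m$ contains a nonzerodivisor on $A$, and we take $x$ to be any such element. It is automatically regular on $U=\omega_A\cong A$, as Lemma~\ref{cone_identification} requires.

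Proposition~\ref{cor_syzygy} then gives that $\Omega^1\omega_{A/xA}$ is maximal Cohen--Macaulay for every such $x$. This is in fact stronger than the ``for some $x$'' in Theorem~\ref{tanvanfar}.

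The only mildly delicate step is passing from $H^2_{\mathfrak m}(A)\cong k$ to $K^2(A)\cong k$, since local duality identifies $K^2(A)$ only after completion. Faithful flatness handles this, because length is preserved under $-\otimes_A\widehat A$ for finite-length modules. Note also that Proposition~\ref{cor_syzygy} already rules out the possibility that $\omega_{A/xA}$ is cyclic: this would force $\dim K^2(A)=d-2=1$, whereas $K^2(A)\cong k$ has dimension $0$. So nothing further needs to be checked.
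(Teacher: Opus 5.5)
Your argument is correct, but the paper proves this corollary differently.

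The paper applies Proposition~\ref{prop_syzygy} with $r=1$ and $L=\omega_{A/xA}$. This reduces the claim to surjectivity of the natural map $K^2(\omega_{A/xA})\to K^2(A)^2\cong k^2$. That surjectivity is then imported from the explicit computation in \cite[p.~5]{xie}. The point is to show that the obstruction isolated in Proposition~\ref{prop_syzygy} is exactly what the earlier paper checked by hand.

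You instead verify the hypotheses of Proposition~\ref{cor_syzygy}. Local duality plus faithful flatness of completion gives $K^2(A)\cong k$, which is cyclic with annihilator $\mathfrak m$. So every nonzerodivisor $x\in\mathfrak m$ is admissible, and the cone/duality argument of Theorem~\ref{main}, identified through Lemma~\ref{cone_identification}, does the rest.

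Your route has several advantages:
\begin{itemize}
\item It is self-contained within the present paper.
\item It gives the conclusion uniformly for every nonzerodivisor $x\in\mathfrak m$.
\item It obtains, from the proof of Proposition~\ref{cor_syzygy}, that $\omega_{A/xA}$ is minimally generated by exactly two elements. The paper's proof takes this as known when it writes $K^2(A)^{\beta_0}$ as $K^2(A)^2\cong k^2$.
\item It makes explicit the passage from $H^2_{\mathfrak m}(A)\cong k$ to $K^2(A)\cong k$, which the paper uses silently.
\end{itemize}
The paper's route does not use Theorem~\ref{main} at all and illustrates the syzygy criterion concretely, at the cost of depending on an outside computation. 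Your deduction is the one the paper itself advertises in its abstract and in the discussion introducing Proposition~\ref{cor_syzygy}.
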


\begin{proof}
	The \(A\)-module \(\omega_{A/xA}\) is Cohen--Macaulay of dimension \(2\).
	By Proposition~\ref{prop_syzygy}, it is enough to show that the natural
	map
$
	K^2(\omega_{A/xA})\to K^2(A)^2
$ is surjective. This is precisely the map
	\[
	f\colon
	\operatorname{Ext}_R^1(\omega_{A/xA},R)
	\longrightarrow
	\operatorname{Ext}_R^1(A^2,R)
	\cong k^2,
	\]
	which was shown to be surjective in \cite[p.~5]{xie}.	
\end{proof}

\section{Examples of Section Rings}

In this section, we give some examples of section rings to which our main
theorem applies.  

\begin{proposition}\label{prop_example}
	Let \(k\) be a field, and let \(r,m\geq 1\). Let \(Z\) be a projective
	\(k\)-variety of dimension \(r\) satisfying
	\begin{equation}\label{condition_a}
			H^i(Z,\mathcal O_Z)\cong
		\begin{cases}
			k, & i=0,r,\\
			0, & \text{otherwise}.
		\end{cases}
	\end{equation}
	Let \(S\) be a projective \(k\)-variety of dimension \(m\) satisfying
	\begin{equation}\label{condition_b}
			H^j(S,\mathcal O_S)\cong
		\begin{cases}
			k, & j=0,\\
			0, & j>0.
		\end{cases}
	\end{equation}
	Set
	$
	X_0=Z\times_k S.
	$
	For \(1\leq j\leq m-1\), suppose that \(X_j\subset X_{j-1}\) is an
	effective Cartier divisor and that
	\begin{equation}\label{condition_c}
		H^q\bigl(X_{j-1},
		\mathcal O_{X_{j-1}}(-X_j)\bigr)=0
		\qquad
		\text{for all }q<\dim X_{j-1}.
	\end{equation}
	If \(m=1\), set \(X=X_0\). If \(m\geq 2\), set
	$
	X=X_{m-1}.
	$
	Assume that \(X\) is integral.
	Let \(L\) be an ample line bundle on \(X\) satisfying
\begin{equation}\label{condition_d}
		H^i(X,L^n)=0
	\qquad
	\text{for all }n\neq 0
	\text{ and }1\leq i\leq r.
\end{equation}
	Define the section ring
	\[
	R=R(X,L)
	=
	\bigoplus_{n\geq 0}H^0(X,L^n),
	\]
	let \(\mathfrak m=R_+\), and set
	$
	A=R_{\mathfrak m}.
	$
	Then
	$
	\dim A=r+2$, 
	$\depth A=r+1,
	$
	and
	$
	H^{r+1}_{\mathfrak m A}(A)\cong k.
	$
\end{proposition}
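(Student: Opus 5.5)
The plan is to compute the local cohomology of the graded ring $R=R(X,L)$ via the standard comparison with sheaf cohomology on $X$, and then localize at $\mathfrak m=R_+$. First, $R$ is a finitely generated $k$-algebra because $L$ is ample. Next, $X$ has dimension $r+1$. Indeed, $\dim X_0=r+m$, and each $X_j$ is an effective Cartier divisor in $X_{j-1}$, so $\dim X=r+m-(m-1)=r+1$; for $m=1$ we have $X=X_0$ of dimension $r+1$ directly. It follows that $\dim R=\dim A=r+2$.

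The key input is the graded comparison for section rings. There is an exact sequence
\[
0\to H^0_{\mathfrak m}(R)\to R\to \bigoplus_{n\in\mathbb Z}H^0(X,L^n)\to H^1_{\mathfrak m}(R)\to 0,
\]
and for $i\geq 1$ there are isomorphisms
\[
H^{i+1}_{\mathfrak m}(R)\cong\bigoplus_{n\in\mathbb Z}H^i(X,L^n).
\]
Since $X$ is integral of positive dimension and $L$ is ample, $H^0(X,L^n)=0$ for $n<0$. Also $R_n=H^0(X,L^n)$ for $n\geq 0$, so $H^0_{\mathfrak m}(R)=H^1_{\mathfrak m}(R)=0$. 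For $1\le i\le r$, condition \eqref{condition_d} kills every graded piece with $n\neq0$. Hence $H^{i+1}_{\mathfrak m}(R)$ is concentrated in degree $0$ and equals $H^i(X,\mathcal O_X)$.

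It therefore remains to compute $H^i(X,\mathcal O_X)$ for $1\le i\le r$, which I expect to be the main step.

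For $X_0=Z\times_k S$, the Künneth formula together with \eqref{condition_a} and \eqref{condition_b} gives $H^i(X_0,\mathcal O_{X_0})\cong\bigoplus_{a+b=i}H^a(Z,\mathcal O_Z)\otimes H^b(S,\mathcal O_S)$. This equals $k$ for $i=0,r$ and vanishes for all other $i$.

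I then pass down the chain $X_0\supset X_1\supset\cdots\supset X_{m-1}$ using the sequences
\[
0\to\mathcal O_{X_{j-1}}(-X_j)\to\mathcal O_{X_{j-1}}\to\mathcal O_{X_j}\to0.
\]
By \eqref{condition_c}, $H^q$ and $H^{q+1}$ of the left-hand term vanish whenever $q+1<\dim X_{j-1}$, that is, whenever $q<\dim X_j$. The long exact sequence then gives $H^q(X_j,\mathcal O_{X_j})\cong H^q(X_{j-1},\mathcal O_{X_{j-1}})$ for all $q<\dim X_j$. Every $X_j$ has dimension at least $r+1$, so this range includes all $q\le r$. By induction, $H^i(X,\mathcal O_X)\cong k$ for $i=r$ and $H^i(X,\mathcal O_X)=0$ for $1\le i\le r-1$. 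The care needed is only in tracking that the top-degree cohomology, which can change along the chain, never enters the range $q\le r$.

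Combining these computations, $H^j_{\mathfrak m}(R)=0$ for $j\le r$, and $H^{r+1}_{\mathfrak m}(R)\cong k$ sitting in degree $0$, hence annihilated by $\mathfrak m$. Local cohomology with support in $\mathfrak m$ commutes with localization at $\mathfrak m$, and these modules are $\mathfrak m$-torsion, so they are unchanged on passing to $A=R_{\mathfrak m}$. Therefore $H^{j}_{\mathfrak mA}(A)=0$ for $j\le r$ and $H^{r+1}_{\mathfrak mA}(A)\cong k\ne0$. This gives $\depth A=r+1$, and we already have $\dim A=r+2$.
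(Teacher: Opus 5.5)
Your proposal is correct and follows essentially the same route as the paper. Both use Künneth on $X_0$, then transfer $H^i(\mathcal O)$ for $i\le r$ down the chain of divisors via \eqref{condition_c}, then apply the Serre--Grothendieck correspondence together with \eqref{condition_d} and $H^0(X,L^n)=0$ for $n<0$, and finally localize at $R_+$. The only difference is the order in which these steps are presented.
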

\begin{proof}
	First, we have $
	\dim X_0
	=
	\dim(Z\times_k S)
	=
	r+m$,
	and 
$
	\dim X
	=
	\dim X_0-(m-1)
 =
	r+1.
	$

	By the K\"unneth formula,
	\[
	H^i(X_0,\mathcal O_{X_0})
	\cong
	\bigoplus_{a+b=i}
	H^a(Z,\mathcal O_Z)\otimes_k H^b(S,\mathcal O_S).
	\]
	Thus by \eqref{condition_a} and \eqref{condition_b}, we have\[
	H^i(X_0,\mathcal O_{X_0})\cong 
	H^i(Z,\mathcal O_Z)\cong
	\begin{cases}
	k, & i=0,r,\\
	0, & \text{otherwise}.
\end{cases}
	\] 
   For each \(j\), there is a short exact sequence
   \[
   0\longrightarrow
   \mathcal O_{X_{j-1}}(-X_j)
   \longrightarrow
   \mathcal O_{X_{j-1}}
   \longrightarrow
   \mathcal O_{X_j}
   \longrightarrow 0.
   \]
   The associated long exact sequence in cohomology contains
   \[
   H^i\bigl(X_{j-1},\mathcal O_{X_{j-1}}(-X_j)\bigr)
   \longrightarrow
   H^i\bigl(X_{j-1},\mathcal O_{X_{j-1}}\bigr)
   \longrightarrow
   H^i\bigl(X_j,\mathcal O_{X_j}\bigr)
   \longrightarrow
   H^{i+1}\bigl(X_{j-1},\mathcal O_{X_{j-1}}(-X_j)\bigr).
   \]  
  By \eqref{condition_c},
  $
  H^i\bigl(X_{j-1},\mathcal O_{X_{j-1}}\bigr)
  \cong
  H^i\bigl(X_j,\mathcal O_{X_j}\bigr)
  $
  for every \(i\leq \dim X_{j-1}-2\). Since \(j\leq m-1\), we have
  $
  \dim X_{j-1}\geq r+2.
 $
  Therefore, iterating these isomorphisms gives
  \begin{equation}\label{condition_e}
  	 H^i(X,\mathcal O_X)
  	\cong
  	H^i(X_0,\mathcal O_{X_0})
  	\qquad
  	\text{for }0\leq i\leq r.
  \end{equation}
Since \(X\) is integral and \(L\) is ample, we have
\(H^0(X,L^j)=0\) for \(j<0\). Moreover, the section ring  
\[
R=R(X,L):=\bigoplus_{n\geq 0}H^0(X,L^n)
\]
 is a
finitely generated graded domain over \(k\), with
\(\operatorname{Proj}R\cong X\) and
$
\widetilde{R(n)}\cong L^n 
$, $n\in \mathbb{Z}$, see \cite[\href{https://stacks.math.columbia.edu/tag/0B5T}{Tag 0B5T}]{stacks-project} and \cite[Thm.~4.5.2, Prop.~4.5.5]{ega2}.
Hence
$
\dim A= \dim R=\dim X+1=r+2.
$
By the Serre--Grothendieck correspondence
\cite[Thm.~20.4.4]{local_cohomology}, for every \(i\geq 1\),
\[
H_{R_+}^{i+1}(R)
\cong
\bigoplus_{j\in\mathbb Z} H^i(X,L^j).
\]
By \eqref{condition_d} and \eqref{condition_e},
$
H_{R_+}^{i+1}(R)=0$
  for $1\leq i\leq r-1$,
and
$
H_{R_+}^{r+1}(R)\cong k.
$   
From the exact sequence \cite[Thm.~2.2.6(c)]{local_cohomology}
\[
0\longrightarrow H_{R_+}^{0}(R)
\longrightarrow R
\longrightarrow \bigoplus_{j\in\mathbb Z} H^0(X,L^j)
\longrightarrow H_{R_+}^{1}(R)
\longrightarrow 0,
\]
the middle map is an isomorphism. Hence
$
H_{R_+}^{0}(R)=H_{R_+}^{1}(R)=0.
$
Localizing at the homogeneous maximal ideal \(\mathfrak m=R_+\) and using local duality give the desired result.
\end{proof}

The conditions of Proposition \ref{prop_example} can be constructed using
Kodaira vanishing. For example, one may take \(Z\) to be a Calabi--Yau
variety over a field $k$ of characteristic \(0\) and
\(S=\mathbb P_k^m\).
Throughout this section, we call a smooth geometrically
connected projective  variety \(Z\) over \(k\) a \emph{projective Calabi--Yau variety}
if
\[
\omega_Z\cong\mathcal O_Z
\qquad\text{and}\qquad
H^i(Z,\mathcal O_Z)=0
\quad\text{for }0<i<\dim Z.
\]
\begin{corollary}\label{cor_example}
Let \(k\) be a field of characteristic \(0\), and let \(r,m\geq 1\).  Let \(Z\) be a
projective  Calabi--Yau variety of dimension \(r\) over \(k\). Let \(S\) be
a smooth projective geometrically connected variety of dimension \(m\)
satisfying
\[
H^j(S,\mathcal O_S)=0
\qquad\text{for all }j>0.
\]
Let
$
X_0=Z\times S.
$
For \(1\leq j\leq m-1\), let
$
X_j\subset X_{j-1}
$
be a smooth ample effective Cartier divisor, and set
$
X=X_{m-1}.
$
Assume that \(\omega_X\) is ample. Let \(a\geq 2\), and set
$
L=\omega_X^{\otimes a}.
$
Then the local section ring
\[
A=
\Bigl(
\bigoplus_{n\geq 0}
H^0\bigl(X,\omega_X^{\otimes an}\bigr)
\Bigr)_{R_+}
\]
satisfies
\[
\dim A=r+2,
\qquad
\depth A=r+1,
\qquad
K^{r+1}(A)\cong k.
\]
\end{corollary}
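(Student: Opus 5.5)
The plan is to verify the hypotheses \eqref{condition_a}--\eqref{condition_d} of Proposition~\ref{prop_example} with $S$ as given. Proposition~\ref{prop_example} then yields $\dim A=r+2$, $\depth A=r+1$ and $H^{r+1}_{\mathfrak m A}(A)\cong k$. Local duality converts the last statement into $K^{r+1}(A)\cong k$, because the Matlis dual of $k$ is $k$ and a finite-length module is unchanged by completion.

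First I would check \eqref{condition_a} and \eqref{condition_b}. Since $Z$ is Calabi--Yau and geometrically connected, $H^0(Z,\mathcal O_Z)=k$, and Serre duality gives $H^r(Z,\mathcal O_Z)\cong H^0(Z,\omega_Z)'\cong k$; the intermediate cohomology vanishes by definition. For $S$, $H^0(S,\mathcal O_S)=k$ by geometric connectedness, and the higher cohomology vanishes by hypothesis.

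Next I would check \eqref{condition_c}. The variety $X_{j-1}$ is smooth projective, and $\mathcal O_{X_{j-1}}(-X_j)$ is the dual of the ample line bundle $\mathcal O(X_j)$. By Kodaira vanishing in the form $H^q(Y,\mathcal L^{-1})=0$ for $q<\dim Y$, we get $H^q(X_{j-1},\mathcal O_{X_{j-1}}(-X_j))=0$ for $q<\dim X_{j-1}$. Characteristic $0$ is used here; over a non-closed field I would reduce to $\bar k$ by flat base change. For integrality, $X$ is smooth, so it suffices that $X$ is connected. Connectedness follows from $H^0(X,\mathcal O_X)\cong H^0(X_0,\mathcal O_{X_0})=k$, which is the $i=0$ case of \eqref{condition_e}. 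That argument only uses \eqref{condition_c}, which has already been established, so the logic is not circular. Alternatively, connectedness of an ample divisor in a variety of dimension $\ge 2$ gives the same conclusion. The argument also works when $m=1$, where $X=X_0$ is connected.

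The main step is \eqref{condition_d}: $H^i(X,\omega_X^{an})=0$ for $n\ne 0$ and $1\le i\le r=\dim X-1$. I would treat the two signs of $n$ separately.
\begin{itemize}
\item For $n\ge 1$, write $\omega_X^{an}=\omega_X\otimes\omega_X^{an-1}$ with $an-1\ge 1$. Since $\omega_X$ is ample, $\omega_X^{an-1}$ is ample, and Kodaira vanishing gives $H^i=0$ for all $i>0$.
\item For $n\le -1$, $\omega_X^{an}$ is the inverse of an ample bundle. Kodaira vanishing in the dual form gives $H^i=0$ for $i<\dim X=r+1$, which covers $1\le i\le r$.
\end{itemize}
I do not expect a genuine obstacle. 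The only care needed is the bookkeeping over a non-algebraically closed field of characteristic $0$, which is handled by base change to $\bar k$; the hypothesis $a\ge 2$ is used only to make $an-1\ge 1$ in the case $n\ge 1$.
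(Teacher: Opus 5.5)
Your proposal is correct and follows essentially the same route as the paper: you verify conditions \eqref{condition_a}--\eqref{condition_d} of Proposition~\ref{prop_example} by Serre duality and Kodaira vanishing, using $a\geq 2$ to write $\omega_X^{\otimes an}=\omega_X\otimes\omega_X^{\otimes(an-1)}$ with $\omega_X^{\otimes(an-1)}$ ample for $n>0$, and you deduce integrality of $X$ from $H^0(X,\mathcal O_X)\cong k$ via \eqref{condition_e} together with smoothness. Your added local-duality step converting $H^{r+1}_{\mathfrak m A}(A)\cong k$ into $K^{r+1}(A)\cong k$ only makes explicit something the paper leaves implicit.
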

\begin{proof}
	We check the conditions in Proposition \ref{prop_example} are satisfied. 
	Since \(Z\) is geometrically connected,
	$
	H^0(Z,\mathcal O_Z)\cong k
	$
	by \cite[\href{https://stacks.math.columbia.edu/tag/0FD2}{Tag 0FD2}]{stacks-project}. Since
	\(\omega_Z\cong\mathcal O_Z\), Serre duality
	\cite[Cor.~7.7]{hartshorne} gives
	$
	H^r(Z,\mathcal O_Z)'
	\cong
	H^0(Z,\omega_Z)
	\cong k.
	$
	Together with the assumed vanishing
$
	H^i(Z,\mathcal O_Z)=0 $ for $0<i<r$
 shows that \(Z\) satisfies \eqref{condition_a}. Similarly, \(S\)
	satisfies \eqref{condition_b}.
	
	For each \(j\), the line bundle
	$
	\mathcal O_{X_{j-1}}(X_j)
	$
	is ample. By Serre duality,
	\[
	H^q\bigl(X_{j-1},\mathcal O_{X_{j-1}}(-X_j)\bigr) {'}
	\cong
	H^{\dim X_{j-1}-q}
	\bigl(
	X_{j-1},
	\omega_{X_{j-1}}\otimes
	\mathcal O_{X_{j-1}}(X_j)
	\bigr).
	\]
	By Kodaira vanishing in characteristic \(0\), see
	\cite[Cor.~2.11]{deligne_illusie}, the latter  vanishes whenever
	\(q<\dim X_{j-1}\). Hence \eqref{condition_c} is satisfied.
	
	Now let \(n>0\). Since
	$
	L^n=\omega_X^{\otimes an}
	=
	\omega_X\otimes\omega_X^{\otimes(an-1)}
	$
	and \(a\geq2\), the line bundle
	\(\omega_X^{\otimes(an-1)}\) is ample. Kodaira vanishing therefore gives
	$
	H^i(X,L^n)=0$
	 {for all }$i>0.
$
	
	For \(n<0\), since \(\dim X=r+1\), by  Serre duality and Kodaira vanishing applied to the ample line bundle \(L^{-n}=\omega_X^{\otimes(-an)}\) gives
	\[
	H^i(X,L^n) {'}
	\cong
	H^{r+1-i}(X,\omega_X\otimes L^{-n})=0
	\qquad\text{whenever }r+1-i>0.
	\]
	Thus
$
	H^i(X,L^n)=0
$  for  $i<r+1, n\neq 0. 
	$
	Therefore \eqref{condition_d} is satisfied for \(L\). Finally, the argument proving \eqref{condition_e} gives
	\(H^0(X,\mathcal O_X)
	\cong k\) thus $X$ is geometrically connected by \cite[\href{https://stacks.math.columbia.edu/tag/0FD2}{Tag 0FD2}]{stacks-project}.
	Moreover, since \(X\) is smooth and geometrically connected,
	it is geometrically integral.  Therefore all the hypotheses of
	Proposition~\ref{prop_example} are satisfied.
\end{proof}
\begin{example}
	Let \(Z\) be a projective Calabi--Yau variety over a field \(k\) of characteristic
	\(0\), and let \(M\) be an ample line bundle on \(Z\). Set
	\(Y:=Z\times \mathbb P_k^2\), and write \(H:=p_1^*M\) and
	\(F:=p_2^*\mathcal O_{\mathbb P_k^2}(1)\), where \(p_1\) and \(p_2\)
	are the two projections. Choose integers \(c>0\) and \(b>3\), and let
	\(X\in |cH+bF|\) be a smooth divisor. Since
	\(\omega_Y\cong p_1^*\omega_Z\otimes p_2^*\omega_{\mathbb P_k^2}
	\cong \mathcal O_Y(-3F)\), the adjunction formula gives
	$
	\omega_X\cong \mathcal O_X\bigl(cH+(b-3)F\bigr).
	$
	Since \(c>0\) and \(b-3>0\), the line bundle \(\omega_X\) is ample.
	For any \(a\geq2\), set \(L:=\omega_X^{\otimes a}\). Then \(X\) and
	\(L\) satisfy the hypotheses of Corollary \ref{cor_example}. 
\end{example}

Moreover, using the criterion of Bhatt \cite[Thm.~1.3]{bhatt}, one can
obtain classes of rings in characteristic \(p\) whose completions admit
no small Cohen--Macaulay algebra, but which satisfy
Proposition \ref{prop_example} and hence admit a maximal
Cohen--Macaulay module. Although Kodaira vanishing fails in general in
characteristic \(p\), certain varieties still satisfy the vanishing
conditions required in Proposition \ref{prop_example}.

\begin{example}
Let \(k\) be a perfect field of characteristic \(p>0\). Let \(Z\) be
either an elliptic curve over \(k\) or a \(K3\) surface of finite height,
and set \(X:=Z\times \mathbb P_k^1\). Let \(M\) be an ample line bundle
on \(Z\), and set
$
L:=p_1^*M\otimes p_2^*\mathcal O_{\mathbb P_k^1}(1).
$
Then the completion of the section ring \(R=R(X,L)\) at its homogeneous
maximal ideal admits no module finite Cohen--Macaulay algebra by
Bhatt  \cite[Thm.~1.2, Ex.~2.10]{bhatt}. Moreover,  using Kodaira vanishing for K3 surfaces in positive
characteristic \cite{mukai_kodaira} and Serre duality, together with an argument similar
to that of Corollary~\ref{cor_example},   \(X\) and \(L\)
satisfy the hypotheses of Proposition~\ref{prop_example}. 
Consequently, \(R(X,L)_{R_+}\) admits a maximal Cohen--Macaulay
module, and hence so does its completion.
\end{example}

\section*{Acknowledgments}

The first part of this work \cite{xie} began while the author was a graduate student at the University of Illinois at Urbana--Champaign, where Sankar Dutta mentioned the paper of Tavanfar and Shimomoto \cite{tavanfar_Shimomoto} and suggested finding a simpler proof of one of its results. The idea for the present continuation formed during a vacation visit to the author's mother in July when revisiting some earlier work. The author is grateful for the time spent with their mother and the fond memories of the University of Illinois at Urbana--Champaign, where they learned a great deal from Sankar Dutta and many others.

\end{document}